\begin{document}

\newcommand{\E}{\mathbb{E}}
\newcommand{\PP}{\mathbb{P}}
\newcommand{\RR}{\mathbb{R}}
\newcommand{\SM}{\mathbb{S}}

\newtheorem{theorem}{Theorem}[section]
\newtheorem{lemma}[theorem]{Lemma}
\newtheorem{coro}[theorem]{Corollary}
\newtheorem{defn}[theorem]{Definition}
\newtheorem{assp}[theorem]{Assumption}
\newtheorem{cond}[theorem]{Condition}
\newtheorem{expl}[theorem]{Example}
\newtheorem{prop}[theorem]{Proposition}
\newtheorem{rmk}[theorem]{Remark}
\newtheorem{conj}[theorem]{Conjecture}

\newcommand\tq{{\scriptstyle{3\over 4 }\scriptstyle}}
\newcommand\qua{{\scriptstyle{1\over 4 }\scriptstyle}}
\newcommand\hf{{\textstyle{1\over 2 }\displaystyle}}
\newcommand\hhf{{\scriptstyle{1\over 2 }\scriptstyle}}
\newcommand\hei{\tfrac{1}{8}}

\newcommand{\eproof}{\indent\vrule height6pt width4pt depth1pt\hfil\par\medbreak}

\def\a{\alpha} \def\g{\gamma}
\def\e{\varepsilon} \def\z{\zeta} \def\y{\eta} \def\o{\theta}
\def\vo{\vartheta} \def\k{\kappa} \def\l{\lambda} \def\m{\mu} \def\n{\nu}
\def\x{\xi}  \def\r{\rho} \def\s{\sigma}
\def\p{\phi} \def\f{\varphi}   \def\w{\omega}
\def\q{\surd} \def\i{\bot} \def\h{\forall} \def\j{\emptyset}

\def\be{\beta} \def\de{\delta} \def\up{\upsilon} \def\eq{\equiv}
\def\ve{\vee} \def\we{\wedge}

\def\F{{\cal F}}
\def\T{\tau} \def\G{\Gamma}  \def\D{\Delta} \def\O{\Theta} \def\L{\Lambda}
\def\X{\Xi} \def\Si{\Sigma} \def\W{\Omega}
\def\M{\partial} \def\N{\nabla} \def\Ex{\exists} \def\K{\times}
\def\V{\bigvee} \def\U{\bigwedge}

\def\1{\oslash} \def\2{\oplus} \def\3{\otimes} \def\4{\ominus}
\def\5{\circ} \def\6{\odot} \def\7{\backslash} \def\8{\infty}
\def\9{\bigcap} \def\0{\bigcup} \def\+{\pm} \def\-{\mp}
\def\la{\langle} \def\ra{\rangle}

\def\proof{\noindent{\it Proof. }}
\def\tl{\tilde}
\def\trace{\hbox{\rm trace}}
\def\diag{\hbox{\rm diag}}
\def\for{\quad\hbox{for }}
\def\refer{\hangindent=0.3in\hangafter=1}

\newcommand\wD{\widehat{\D}}
\newcommand{\ka}{\kappa_{10}}

\begin{frontmatter}
\title{Mean Square Polynomial Stability of Numerical Solutions to a Class of Stochastic Differential Equations}

\author[WLMF]{Wei Liu\corref{cor}}
\ead{w.liu@lboro.ac.uk;lwbvb@hotmail.com}

\author[WLMF]{Mohammud Foondun}

\author[XM]{Xuerong Mao}

\cortext[cor]{Corresponding author}

\address[WLMF]{Loughborough University, Department of Mathematical Sciences, Loughborough, Leicestershire, LE11 3TU, UK}

\address[XM]{University of Strathclyde, Department of Mathematics and Statistics, Glasgow, G1 1XH, UK}

\begin{abstract}
The exponential stability of numerical methods to stochastic differential equations (SDEs) has been widely studied. In contrast, there are relatively few works on polynomial stability of numerical methods. In this letter, we address the question of reproducing the polynomial decay of a class of SDEs using the Euler--Maruyama method and the backward Euler--Maruyama method. The key technical contribution is based on various estimates involving the gamma function.
\end{abstract}
\begin{keyword}
Polynomial stability, Nonlinear SDEs, Euler-type method, Gamma function, Numerical reproduction
\end{keyword}
\end{frontmatter}

\linenumbers

\section{Introduction} \label{intro}
The stability of stochastic differential equations (SDEs) has been widely studied by many authors, see for instance \cite{Pro1990a,Liu2006a,M2008a,Kha2012} and references therein. In particular, different decay rates have been widely investigated, for example, exponential stability \cite{LMS2006a,MYY2007a}, polynomial stability \cite{Mao1992poly,AB2003a} and general rate \cite{LC2001a,CGR2003a}.
\par
It is natural to ask whether numerical solutions of SDEs preserve stability properties of the original SDEs and in recent years, this question has received quite a lot of attention.  See for instance \cite{Sai96,Sch1997a,Hig2000a} for some of the original ideas in this area. An important feature of these works is that they focus on the exponential stability of the numerical solutions \cite{HMY2007a,Pan08,WG2011a,WMK2013a}. 
\par
As far as we know, there are few papers devoted to the polynomial stability of the numerical solutions. In this letter, we try to fill the gap by studying the reproduction of the polynomial stability of a class of SDEs by using the Euler-Maruyama (EM) method and the backward Euler-Maruyama (BEM) method. We mention some related works on difference equations \cite{AGR2007a,AMR2008a}. Our approach is different from theirs.
\par
Since we are interested in reproducing polynomial decay rates, we need to use different techniques to those used to handle the exponential decay rates. Our method hinges on various properties of the gamma function and ratios of gamma functions. This is significantly different from say \cite{HMY2007a}, where exponential stability is considered. 

\par
The structure of the letter is as follows. In Section \ref{mainres}, we give some background information which include the polynomial stability of the class of SDEs.  We then state and prove two lemmas about the gamma function, both of which will be crucial in the proofs of our main results.  In section \ref{themain} we first study the polynomial stability of the EM method followed by an counterexample which shows the failure of this method to preserve polynomial stability when an superlinear term appears on the drift coefficient. We then show that by using a semi-implicit method, namely the BEM, one can indeed preserve polynomial stability under less stringent conditions.
We illustrate our result with some simulations in  Section \ref{numexp} and conclude the letter with a discussion of future research in Section \ref{conclusion}. 

\section{Preliminary}\label{mainres}
Throughout this letter, we let ($\Omega,\F, \{\F_t\}_{t \geq 0}, \PP$) be a complete probability space with a filtration $\{\F_t\}_{t \geq 0}$ that is increasing and right continuous, with $\F_0$ containing all $\PP$-null sets. Let $|\cdot|$ denote the Euclidean norm in $\RR^n$. The inner product of $x$ and $y$ in $\RR^n$ is denoted by $\la x, y \ra$. For any $a \in \RR$, $[a]$ denotes the integer part of a. For the sake of simplicity, we only consider the case of scalar Brownian motion denoted by $B(t)$ and defined on the probability space.  But it is not hard to see that our results can be extended to the case of multi-dimensional Brownian motion.
\par
In this letter, we study the SDEs of It\^o type,
\begin{equation}
\label{sde}
dx(t) = f(x(t),t) dt + g(x(t),t)dB(t) ~~~ \text{on} ~~~ t \geq 0,
\end{equation}
with initial value $x(0) \in  \RR^n$, and $f,g$ : $\RR^n \rightarrow \RR^n$.  We will need following conditions for later.

\begin{cond}
For every integer $r \geq 1$ and any $t \geq 0$, there exists a positive constant $\bar{K}_{r,t}$ such that, for $\forall x, y \in \RR^n$ with $\max(|x|,|y|) \leq r$,
\begin{equation}
\label{loclip}
\max (|f(x,t) - f(y,t)|^2,|g(x,t) - g(y,t)|^2) \leq \bar{K}_{r,t} |x - y|^2.
\end{equation}
\end{cond}
This condition is needed for the existence and uniqueness of the solution, so we do not need any more information about how the constant depends on $t$. 
\begin{cond}\label{onesidedlipc}
We assume that for $\forall x, y \in \RR^n$ and $\forall t > 0$, there exists a constant $\bar{K}$ such that
\begin{equation}
\label{onesidedlip}
\la x-y, f(x,t) -f(y,t) \ra \leq \bar{K} (1 + t)^{-1} |x - y|^2.
\end{equation}
\end{cond}

\begin{cond}\label{growth}
Suppose that $\forall t>0$ and $x\in \RR^d$, there exist positive constants $K_1$ and $C$ such that
\begin{equation}
\label{lingrof}
|f(x,t)| \leq K_1 (1+t)^{-1} |x|,
\end{equation}

\begin{equation}
\label{onesidf}
\la x, f(x,t) \ra \leq -K_1 (1+t)^{-1} |x|^2,
\end{equation}

\begin{equation}
\label{lingrog}
|g(x,t)| \leq C (1+t)^{-K_1}.
\end{equation}
\end{cond}

The following theorem states that under some condition, the solution to \eqref{sde} is mean-square polynomially stable.  Our subsequent numerical approximations will seek to preserve the upper bound on the rate stated in this theorem. 
  
\begin{theorem}
\label{themsde}
Suppose that \eqref{onesidf}and \eqref{lingrog} of Condition \ref{growth} hold, if $K_1 > 0.5$ then the solution to (\ref{sde}) is mean square polynomially stable. In other words, for any initial value $x(0) \in \RR^n$

\begin{equation*}
\limsup_{t \rightarrow \infty} \frac{\log \E |x(t)|^2 }{\log t} \leq - (2 K_1 - 1 ).
\end{equation*}

\end{theorem}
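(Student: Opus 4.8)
The plan is to apply It\^o's formula not to $|x(t)|^2$ directly but to the weighted process $V(x(t),t):=(1+t)^{2K_1}|x(t)|^2$, the point being that the weight $(1+t)^{2K_1}$ is chosen so that its time derivative is exactly cancelled by the dissipativity in \eqref{onesidf}. Carrying out the computation,
\begin{equation*}
dV(x(t),t)=\Big(2K_1(1+t)^{2K_1-1}|x(t)|^2+(1+t)^{2K_1}\big(2\la x(t),f(x(t),t)\ra+|g(x(t),t)|^2\big)\Big)\,dt+2(1+t)^{2K_1}\la x(t),g(x(t),t)\ra\,dB(t).
\end{equation*}
By \eqref{onesidf}, $2(1+t)^{2K_1}\la x(t),f(x(t),t)\ra\le -2K_1(1+t)^{2K_1-1}|x(t)|^2$, which annihilates the first drift term, and by \eqref{lingrog}, $(1+t)^{2K_1}|g(x(t),t)|^2\le C^2(1+t)^{2K_1}(1+t)^{-2K_1}=C^2$. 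Hence the drift of $V(x(t),t)$ is bounded above by the constant $C^2$.

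To turn this into an estimate on $\E|x(t)|^2$ I would localize: set $\tau_R=\inf\{t\ge0:|x(t)|\ge R\}$, apply It\^o's formula on $[0,t\wedge\tau_R]$, and take expectations. On that interval the integrand of the martingale term is bounded, so its expectation vanishes, giving
\begin{equation*}
\E\big[(1+t\wedge\tau_R)^{2K_1}|x(t\wedge\tau_R)|^2\big]\le |x(0)|^2+C^2\,\E[t\wedge\tau_R]\le |x(0)|^2+C^2 t.
\end{equation*}
A standard non-explosion argument (apply It\^o to $|x(t)|^2$ and use \eqref{onesidf} together with $\int_0^\infty(1+s)^{-2K_1}\,ds<\infty$, valid since $K_1>0.5$, to bound $\sup_{R}\E|x(t\wedge\tau_R)|^2$, whence $\PP(\tau_R\le t)\to0$ as $R\to\infty$) shows $\tau_R\to\infty$ a.s., so letting $R\to\infty$ and invoking Fatou's lemma yields $(1+t)^{2K_1}\E|x(t)|^2\le |x(0)|^2+C^2 t$, i.e.
\begin{equation*}
\E|x(t)|^2\le\big(|x(0)|^2+C^2 t\big)(1+t)^{-2K_1}.
\end{equation*}

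Taking logarithms and dividing by $\log t$, the right-hand side contributes $\tfrac{\log(|x(0)|^2+C^2 t)}{\log t}\to1$ and $\tfrac{-2K_1\log(1+t)}{\log t}\to-2K_1$, so $\limsup_{t\to\infty}\tfrac{\log\E|x(t)|^2}{\log t}\le 1-2K_1=-(2K_1-1)$, which is the asserted bound and is a genuine decay rate precisely because $K_1>0.5$. The only delicate step is the localization: one must guarantee the solution does not explode so that $\tau_R\to\infty$ a.s. and Fatou applies, which is where the one-sided structure in Conditions \ref{onesidedlipc}--\ref{growth} is used; everything else reduces to a one-line Gronwall-type estimate once the right weight $(1+t)^{2K_1}$ has been guessed. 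Note that, in contrast with the numerical analysis of the later sections, this continuous-time result needs no properties of the gamma function.
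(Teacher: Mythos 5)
Your proof is correct. Note that the paper does not actually supply a proof of this theorem: it states the result and defers entirely to the reference on general decay rates, so there is no in-paper argument to compare against. Your weighted-Lyapunov computation --- applying It\^o's formula to $(1+t)^{2K_1}|x(t)|^2$ so that the time derivative of the weight is exactly absorbed by the dissipativity in \eqref{onesidf}, while \eqref{lingrog} reduces the diffusion contribution to the constant $C^2$ --- is precisely the standard route for such polynomial (and more general) decay-rate results, and it yields the clean bound $\E|x(t)|^2\le(|x(0)|^2+C^2t)(1+t)^{-2K_1}$ from which the asserted $\limsup$ follows immediately. The localization and Fatou step is handled properly; the only small over-complication is your appeal to $\int_0^\infty(1+s)^{-2K_1}\,ds<\infty$ for non-explosion, which is not needed: since $\la x,f(x,t)\ra\le0$ and $|g(x,t)|^2\le C^2$, one gets $\E|x(t\wedge\tau_R)|^2\le|x(0)|^2+C^2t$ directly, whence $R^2\,\PP(\tau_R\le t)\le|x(0)|^2+C^2t$ and $\tau_R\to\infty$ a.s. Your closing observation is also apt: the gamma-function machinery of the paper is a discrete-time surrogate for exactly this continuous-time weight argument, which is why it is absent here.
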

\par
\noindent
This is well known so we refer the readers to  \cite{LC2001a} for the proof.
\par
\noindent
\medskip
The next lemma benefits from the following infinite product definition for the gamma function. Suppose that $x$ is any real number, except non-positive integers, then
\begin{equation}
\label{gammainfprod}
\Gamma(x) =  \frac{1}{x} \prod_{j=1}^{\infty}\frac{(1+\frac{1}{j})^x}{1+\frac{x}{j}}.
\end{equation}
See, for example, Page 894 of \cite{JZ2007a} for more details. 

\begin{lemma}
\label{prodtogamma}
Given $\alpha > 0$ and $\beta \geq 0$, if there exists a $\delta$ such that $0 < \delta < \alpha^{-1}$ then
\begin{equation*}
\prod_{i = a}^{b} \left( 1 - \frac{\alpha \delta}{1 + (i + \beta) \delta} \right) = \frac{\Gamma (b + 1 + \delta^{-1} + \beta - \alpha)}{\Gamma(b + 1 +  \delta^{-1} + \beta) } \times \frac{\Gamma(a +  \delta^{-1} + \beta)}{\Gamma( a +  \delta^{-1} + \beta - \alpha)},
\end{equation*}
where $0 \leq a \leq b$.
\end{lemma}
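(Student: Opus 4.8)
The plan is to rewrite each factor on the left-hand side so that the product telescopes into a ratio of gamma functions via the infinite-product formula \eqref{gammainfprod}. First I would observe that
\[
1 - \frac{\alpha\delta}{1+(i+\beta)\delta} = \frac{1+(i+\beta)\delta - \alpha\delta}{1+(i+\beta)\delta} = \frac{i+\beta+\delta^{-1}-\alpha}{i+\beta+\delta^{-1}},
\]
where the last equality comes from multiplying numerator and denominator by $\delta^{-1}$; the hypothesis $0<\delta<\alpha^{-1}$ guarantees $\delta^{-1}-\alpha>0$, so every factor is positive and no gamma function below is evaluated at a non-positive integer. Writing $c := \delta^{-1}+\beta$ for brevity, the left-hand side becomes $\prod_{i=a}^{b}\frac{i+c-\alpha}{i+c}$.

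Next I would recognize this finite product as a ratio of shifted factorials (Pochhammer symbols). Concretely, $\prod_{i=a}^{b}(i+c) = \Gamma(b+1+c)/\Gamma(a+c)$ and likewise $\prod_{i=a}^{b}(i+c-\alpha) = \Gamma(b+1+c-\alpha)/\Gamma(a+c-\alpha)$, so that
\[
\prod_{i=a}^{b}\frac{i+c-\alpha}{i+c} = \frac{\Gamma(b+1+c-\alpha)}{\Gamma(a+c-\alpha)}\cdot\frac{\Gamma(a+c)}{\Gamma(b+1+c)},
\]
which is exactly the claimed identity once $c=\delta^{-1}+\beta$ is substituted back. To make the identity $\prod_{i=a}^{b}(i+c)=\Gamma(b+1+c)/\Gamma(a+c)$ rigorous from the stated tools rather than quoting $\Gamma(z+1)=z\Gamma(z)$ as a black box, I would derive it from \eqref{gammainfprod}: for a real parameter $z$ (not a non-positive integer),
\[
\frac{\Gamma(z+m)}{\Gamma(z)} = \frac{z}{z+m}\prod_{j=1}^{\infty}\frac{(1+\tfrac1j)^{m}(1+\tfrac{z}{j})}{1+\tfrac{z+m}{j}},
\]
and a careful rearrangement of the partial products (pairing the $j$-th factor of one product against a shifted factor of the other) collapses the infinite product to the finite product $z(z+1)\cdots(z+m-1)$; this is the standard proof that the infinite-product definition satisfies the functional equation. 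Applying it with $z=a+c$, $m=b+1-a$ gives the numerator identity, and with $z=a+c-\alpha$ the denominator identity.

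The only real obstacle is the bookkeeping in that telescoping/rearrangement step: one must check convergence of the relevant infinite products (the general term is $1+O(1/j^2)$, so this is fine) and verify that the shifts line up so that all but finitely many factors cancel. I would handle this by working with partial products up to $J$, extracting the finite rational factor explicitly, and letting $J\to\infty$. Everything else is elementary algebra, and the positivity hypothesis $\delta<\alpha^{-1}$ together with $0\le a\le b$ is used only to ensure all arguments of $\Gamma$ appearing in the statement are positive (hence the formula \eqref{gammainfprod} applies and no division by zero occurs).
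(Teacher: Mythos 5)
Your proposal is correct, but it takes a genuinely different route from the paper. The paper never clears denominators: it writes the finite product as a ratio of two \emph{infinite} products, $\prod_{i=a}^{\infty}/\prod_{i=b+1}^{\infty}$, massages each into the exact shape of the infinite-product definition \eqref{gammainfprod} (inserting a factor $\prod_{i\ge1}(1+1/i)^{0}$ split across four exponents to complete the pattern), and then reads off the four gamma functions directly from that definition. You instead simplify each factor to $\frac{i+c-\alpha}{i+c}$ with $c=\delta^{-1}+\beta$ and telescope via the functional equation $\Gamma(z+1)=z\Gamma(z)$, so the whole identity reduces to $\prod_{i=a}^{b}(i+z)=\Gamma(b+1+z)/\Gamma(a+z)$ applied twice. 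Your argument is shorter and more transparent, and it isolates exactly where the hypotheses $\delta<\alpha^{-1}$, $\beta\ge0$, $a\ge0$ are used (positivity of all gamma arguments); the paper's version has the virtue of relying on nothing but the single quoted formula \eqref{gammainfprod}, at the cost of heavier bookkeeping. Note that the recursion $\Gamma(x+1)=x\Gamma(x)$ is invoked explicitly elsewhere in the paper (in the proof of Theorem \ref{themEM}), so your reliance on it is consistent with the paper's toolkit, and your optional derivation of the functional equation from \eqref{gammainfprod} is standard and need not be spelled out in full.
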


\begin{proof}
One can rewrite the finite product into the ratio of two infinite products,
\begin{equation}
\label{ratioexpress}
\prod_{i = a}^{b} \left( 1 - \frac{\alpha \delta}{1 + (i + \beta) \delta} \right) = \frac{\prod_{i = a}^{\infty} \left( 1 - \frac{\alpha \delta}{1 + (i + \beta) \delta} \right)}{\prod_{i = b+1}^{\infty} \left( 1 - \frac{\alpha \delta}{1 + (i + \beta) \delta} \right)}.
\end{equation} 
For the numerator of (\ref{ratioexpress}), one can further rewrite it into that
\begin{eqnarray}
\label{numexpress}
&&\prod_{i = a}^{\infty} \left( 1 - \frac{\alpha \delta}{1 + (i + \beta) \delta} \right) 
\nonumber \\
&=& \frac{1 + (a + \beta - \alpha)\delta}{ 1 + (a + \beta)\delta} \prod_{i = 1}^{\infty} \left( 1 - \frac{\alpha \delta}{1 + (i + a + \beta) \delta} \right)
\nonumber \\
&=& \frac{1 + (a + \beta - \alpha)\delta}{ 1 + (a + \beta)\delta}  \prod_{i = 1}^{\infty} \left( \frac{i \delta}{1 + (i + a + \beta) \delta} \times \frac{1 + (i + a + \beta - \alpha) \delta}{i \delta} \right)
\nonumber \\
&=& \frac{a + \delta^{-1} + \beta - \alpha}{a+ \delta^{-1} + \beta} \prod_{i = 1}^{\infty} \left( \frac{1}{ 1 + \frac{a+ \delta^{-1} + \beta}{i}} \times \left( 1 + \frac{a + \delta^{-1} + \beta - \alpha}{i} \right) \right).
\end{eqnarray}
Similarly, one can rewrite the denominator of (\ref{ratioexpress}) into that
\begin{eqnarray}
\label{denexpress}
&&\prod_{i = b+1}^{\infty} \left( 1 - \frac{\alpha \delta}{1 + (i + \beta) \delta} \right)
\nonumber \\
&=& \left ( \frac{b + 1 + \delta^{-1} + \beta - \alpha}{ b + 1 + \delta^{-1} + \beta} \right )
\nonumber \\
&&\times \prod_{i = 1}^{\infty} \left( \frac{1}{ 1 + \frac{b + 1 + \delta^{-1} + \beta}{i}} \times \left( 1 + \frac{b + 1 + \delta^{-1} + \beta - \alpha}{i} \right) \right).
\end{eqnarray}
Substituting (\ref{numexpress}) and (\ref{denexpress}) into (\ref{ratioexpress}), and using the identity that
\begin{equation*}
1 = \prod_{i = 1}^{\infty} \left(1 + \frac{1}{i} \right)^{(a + \delta^{-1} + \beta) - (a + \delta^{-1} + \beta - \alpha) + (b + 1 + \delta^{-1} + \beta - \alpha) - (b + 1 + \delta^{-1} + \beta)},
\end{equation*}
one can see that
\begin{eqnarray*}
&&\prod_{i = a}^{b} \left( 1 - \frac{\alpha \delta}{1 + (i + \beta) \delta} \right) 
\nonumber \\
&=& \left( \frac{1}{b + 1 + \delta^{-1} + \beta - \alpha} \prod_{i = 1}^{\infty} \frac{(1 + \frac{1}{i})^{b + 1 + \delta^{-1} + \beta - \alpha}}{1 + \frac{b + 1 + \delta^{-1} + \beta - \alpha}{i}} \right) \times  \left( \frac{1}{b + 1 + \delta^{-1} + \beta} \prod_{i = 1}^{\infty} \frac{(1 + \frac{1}{i})^{b + 1 + \delta^{-1} + \beta}}{1 + \frac{b + 1 + \delta^{-1} + \beta}{i}} \right)^{-1}
\nonumber \\
&&\times \left( \frac{1}{a + \delta^{-1} + \beta} \prod_{i = 1}^{\infty} \frac{(1 + \frac{1}{i})^{a + \delta^{-1} + \beta}}{1 + \frac{a + \delta^{-1} + \beta}{i}} \right) \times \left( \frac{1}{a + \delta^{-1} + \beta - \alpha} \prod_{i = 1}^{\infty} \frac{(1 + \frac{1}{i})^{a + \delta^{-1} + \beta - \alpha}}{1 + \frac{a + \delta^{-1} + \beta - \alpha}{i}} \right)^{-1}.
\end{eqnarray*}
Applying (\ref{gammainfprod}), the proof is complete. \eproof 
\end{proof}
\medskip
\par
\noindent
Next, we present two estimates about the ratio of the gamma functions in the following lemma, and refer the readers for more results about the ratio of the gamma functions to \cite{KER1983}. The technique used in the proof of the following lemma is similar to that in \cite{KER1983}, so we just brief the proof. 
\begin{lemma}
\label{lemgamma}
For any $x>0$, if $0 < \eta < 1$
\begin{equation}
\label{tlessone}
\frac{\Gamma (x+\eta)}{\Gamma(x)} < x^\eta,
\end{equation}
and if $\eta >1$
\begin{equation}
\label{tlargerone}
\frac{\Gamma (x+\eta)}{\Gamma(x)} > x^\eta.
\end{equation}
\end{lemma}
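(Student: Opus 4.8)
The plan is to exploit the logarithmic convexity of $\Gamma$ on $(0,\infty)$ together with the functional equation $\Gamma(x+1)=x\Gamma(x)$. Recall that $\log\Gamma$ is convex; equivalently, the digamma function $\psi=(\log\Gamma)'$ is strictly increasing. For the case $0<\eta<1$, I would write the ratio as a quotient involving interpolation between consecutive integer shifts, but the cleanest route is to consider the auxiliary function $h(x)=\log\Gamma(x+\eta)-\log\Gamma(x)-\eta\log x$ and show $h(x)<0$ for all $x>0$. Since $h(x)\to 0$ as $x\to\infty$ (by Stirling, $\log\Gamma(x+\eta)-\log\Gamma(x)=\eta\log x+o(1)$), it suffices to show $h$ is increasing, i.e.\ $h'(x)=\psi(x+\eta)-\psi(x)-\eta/x>0$. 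Using the series representation $\psi(x+\eta)-\psi(x)=\sum_{k=0}^{\infty}\bigl(\tfrac{1}{x+k}-\tfrac{1}{x+k+\eta}\bigr)$ and comparing term by term against $\eta/x=\sum$-type bounds, or alternatively using the integral representation $\psi(x+\eta)-\psi(x)=\int_0^\infty \frac{e^{-xt}(1-e^{-\eta t})}{1-e^{-t}}\,dt$ and the elementary inequality controlling $\frac{1-e^{-\eta t}}{1-e^{-t}}$ versus $\eta$, one obtains the desired sign. The case $\eta>1$ is handled symmetrically: the same function $h$ now satisfies $h'(x)<0$, $h$ is decreasing to $0$, hence $h(x)>0$, giving \eqref{tlargerone}.

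An alternative, more elementary approach — and the one I would actually present, since the excerpt advertises "estimates involving the gamma function" of an elementary flavor — uses Hölder's inequality on the Euler integral. Write $\Gamma(x+\eta)=\int_0^\infty t^{x+\eta-1}e^{-t}\,dt$. For $0<\eta<1$, apply Hölder with exponents $p=1/\eta$ and $q=1/(1-\eta)$ to the factorization $t^{x+\eta-1}e^{-t}=\bigl(t^{x}e^{-t}\bigr)^{\eta}\cdot\bigl(t^{x-1}e^{-t}\bigr)^{1-\eta}$, which yields
\begin{equation*}
\Gamma(x+\eta)\le \Gamma(x+1)^{\eta}\,\Gamma(x)^{1-\eta}=\bigl(x\Gamma(x)\bigr)^{\eta}\Gamma(x)^{1-\eta}=x^{\eta}\Gamma(x),
\end{equation*}
and strictness of Hölder (the two integrands are not proportional) gives the strict inequality \eqref{tlessone}. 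For $\eta>1$, write $\eta=n+\eta_0$ with $n=[\eta]\ge 1$ and $0\le\eta_0<1$; iterate the functional equation to get $\Gamma(x+\eta)=(x+\eta-1)(x+\eta-2)\cdots(x+\eta_0)\,\Gamma(x+\eta_0)$, and bound below. If $\eta_0=0$ this is immediate since each factor $x+\eta-j>x$. If $\eta_0>0$, combine the product of the $n$ linear factors (each exceeding $x$) with the reverse Hölder / convexity inequality $\Gamma(x+\eta_0)\ge x^{\eta_0-1}\Gamma(x+1)$... more cleanly, apply the already-proved case \eqref{tlessone} in the form $\Gamma(x+\eta_0)\Gamma(x+1)^{...}$; concretely, from $\Gamma((x+1)+(\eta_0))<(x+1)^{\eta_0}\Gamma(x+1)$ is the wrong direction, so instead use that for $\eta>1$ the same Hölder step run with exponents attached to $\Gamma(x+\lceil\eta\rceil)$ and $\Gamma(x)$ reverses.

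The main obstacle is the bookkeeping in the case $\eta>1$: Hölder's inequality on the Euler integral naturally produces an \emph{upper} bound for an interpolated value, so to get the lower bound \eqref{tlargerone} one must instead view $\Gamma(x)$ (or $\Gamma(x+\lfloor\eta\rfloor)$) as the interpolated quantity sitting between $\Gamma(x+\eta)$ and a shift to the left, and then the left-shift $\Gamma(x-\text{something})$ may leave the domain $(0,\infty)$ when $x$ is small. I would circumvent this by peeling off $[\eta]$ integer steps first (each contributing a factor strictly larger than $x$, valid for all $x>0$) and applying the convexity/Hölder estimate only to the remaining fractional part, where the shift stays in $(0,\infty)$. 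Keeping track of the strict vs.\ non-strict cases (whether $\eta$ is an integer) and verifying the limiting normalization via Stirling are the only other points requiring care; both are routine.
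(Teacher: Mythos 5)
Your first sketch is sound and is essentially a continuous variant of the paper's own argument: the paper likewise normalizes the ratio as $h(x)=\Gamma(x+\eta)\Gamma(x)^{-1}x^{-\eta}$ and uses $h(x)\to 1$ as $x\to\infty$, but instead of differentiating $\log h$ (which brings in the digamma function and the inequality $\psi(x+\eta)-\psi(x)>\eta/x$ for $0<\eta<1$, with the reverse for $\eta>1$, provable as you indicate from the elementary bound $(1-e^{-\eta t})/(1-e^{-t})>\eta$) it forms the explicit ratio $H(x)=h(x)/h(x+1)=\frac{x}{x+\eta}\left(\frac{x+1}{x}\right)^{\eta}$, computes $H'(x)/H(x)=\frac{\eta(1-\eta)}{x(x+\eta)(x+1)}$, and concludes $h(x)<h(x+n)\to 1$ (resp.\ $>$). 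The paper's route needs no special-function machinery beyond the limit of $h$; yours needs one extra elementary inequality but is equally valid, provided you actually carry out the sign verification you currently wave at.

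The H\"older approach that you say you would actually present is clean for $0<\eta<1$, but your treatment of $\eta>1$ has a genuine gap that you notice without closing: after peeling off $[\eta]$ integer factors you are left needing a \emph{lower} bound on $\Gamma(x+\eta_0)/\Gamma(x)$ with $0<\eta_0<1$, and the already-proved case \eqref{tlessone} supplies only the upper bound $\Gamma(x+\eta_0)<x^{\eta_0}\Gamma(x)$, which points the wrong way; multiplying it by $x^{[\eta]}$ cannot yield \eqref{tlargerone}. The repair is to reverse the roles in the interpolation without ever shifting to the left of $x$: since $x+1=\frac{1}{\eta}(x+\eta)+\left(1-\frac{1}{\eta}\right)x$ with $\frac{1}{\eta}\in(0,1)$, log-convexity (H\"older on the Euler integral) gives $x\Gamma(x)=\Gamma(x+1)\le \Gamma(x+\eta)^{1/\eta}\Gamma(x)^{1-1/\eta}$, hence $x^{\eta}\Gamma(x)\le \Gamma(x+\eta)$, and strictness follows because the two integrands are not proportional. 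All arguments of $\Gamma$ involved lie in $(0,\infty)$, so no case analysis on $[\eta]$ or on small $x$ is needed.
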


\begin{proof}
Define $h(x)$ by
\begin{equation*}
h(x) = \frac{\Gamma (x+\eta)}{\Gamma(x)} x^{-\eta}, 
\end{equation*}
from page 12 of \cite{MOS1966} one has that for any $\eta > 0$
\begin{equation*}
\lim_{x \rightarrow \infty} h(x) = 1.
\end{equation*}
Now let
\begin{equation*}
H(x) = \frac{h(x)}{h(x+1)} = \frac{x}{x+\eta}\left( \frac{x+1}{x}\right)^\eta.
\end{equation*}
Then one sees that
\begin{equation*}
\frac{H'(x)}{H(x)} = \frac{\eta(1-\eta)}{x(x+\eta)(x+1)}.
\end{equation*}
If $0 < \eta <1$ one has $H'(x) > 0$, then $H(x)$ is a strict increasing function. Since $\lim_{x \rightarrow \infty} H(x) = 1$, one has that $H(x) < 1$ for all $x > 0$, which indicates $h(x) < h(x + 1) < h(x + n)$. Letting $n \rightarrow \infty$ yields $h(x) < 1$, that is (\ref{tlessone}) holds.
\par
If $\eta > 1$, one has $H'(x) < 0$ and $H(x)$ is a strict decreasing function. Using the similar argument as above, one sees that $H(x) > 1 $ and $h(x) > 1$. Thus (\ref{tlargerone}) holds. \eproof
\end{proof}

\section{Main Results}\label{themain}
We are now ready to state and prove the main results of the letter. We begin with the EM method.

\subsection{The Euler--Maruyama Method}
The EM method for (\ref{sde}) is given by
\begin{equation}
\label{EMdis}
Y_{k+1} = Y_k + f(Y_k, k \D t ) \D t + g(Y_k, k \D t) \Delta B_k,~~~Y_0 = x(0),
\end{equation}
where $\D t > 0$ is the time step  and $\D B_k = B((k+1)\D t) - B(k \D t)$ is the Brownian motion increment. Note that for the result below to remain valid we require \eqref{lingrof} of Condition \ref{growth} to hold.
\par

\begin{theorem}
\label{themEM}
Assume \eqref{lingrof}, \eqref{onesidf} and \eqref{lingrog} of Condition \ref{growth}.  If $K_1 \geq 1$ and $\D t < (2+K_1)^{-1}$ then the EM solution (\ref{EMdis}) satisfies

\begin{equation}
\label{EMmeansquare}
\limsup_{k \rightarrow \infty} \frac{\log \E |Y_k|^2}{\log k \D t} \leq - (2 K_1 - 1),
\end{equation}
for any initial value $Y_0 \in \RR^n$.
\end{theorem}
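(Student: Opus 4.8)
The plan is to reduce \eqref{EMmeansquare} to a decay estimate on $u_k:=\E|Y_k|^2$ coming from a one-step recursion that is then unrolled and controlled through the two gamma-function lemmas. First I would square \eqref{EMdis}, take the conditional expectation given $\F_{k\D t}$ and use that $\D B_k$ is independent of $\F_{k\D t}$ with $\E\D B_k=0$, $\E\D B_k^2=\D t$; the cross terms drop and
\begin{equation*}
\E\big[|Y_{k+1}|^2\mid\F_{k\D t}\big]=|Y_k|^2+2\D t\,\la Y_k,f(Y_k,k\D t)\ra+\D t^2|f(Y_k,k\D t)|^2+\D t\,|g(Y_k,k\D t)|^2.
\end{equation*}
Bounding the last three terms by \eqref{onesidf}, \eqref{lingrof} and \eqref{lingrog} respectively and taking expectations yields
\begin{equation*}
u_{k+1}\le\Big(1-\tfrac{K_1\D t}{1+k\D t}\Big)^2u_k+\tfrac{C^2\D t}{(1+k\D t)^{2K_1}}.
\end{equation*}
The crucial observation is that the two drift contributions combine into a perfect square, so the coefficient is nonnegative, and since $\D t<(2+K_1)^{-1}<K_1^{-1}$ it lies in $(0,1)$; this is exactly what makes Lemma~\ref{prodtogamma} applicable below with $\alpha=K_1$, $\beta=0$, $\delta=\D t$. (A short induction using \eqref{lingrof} and \eqref{lingrog} first guarantees $u_k<\infty$ for every $k$, so the recursion is meaningful.)

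Next I would iterate this inequality to
\begin{equation*}
u_k\le\Big(\prod_{i=0}^{k-1}a_i\Big)u_0+\sum_{j=0}^{k-1}\Big(\prod_{i=j+1}^{k-1}a_i\Big)\frac{C^2\D t}{(1+j\D t)^{2K_1}},\qquad a_i:=\Big(1-\tfrac{K_1\D t}{1+i\D t}\Big)^2,
\end{equation*}
and rewrite each product $\prod_{i=a}^{b}\big(1-K_1\D t/(1+i\D t)\big)$ via Lemma~\ref{prodtogamma} as a ratio of four gamma values with arguments $a+\D t^{-1}-K_1$, $a+\D t^{-1}$, $b+1+\D t^{-1}-K_1$, $b+1+\D t^{-1}$, all positive because $\D t<K_1^{-1}$. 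Lemma~\ref{lemgamma} is then used twice. For the "head" factor $\Gamma(k+\D t^{-1}-K_1)/\Gamma(k+\D t^{-1})$ I invoke $K_1\ge 1$, so $\Gamma(x+K_1)/\Gamma(x)\ge x^{K_1}$ and this factor is $\le(k+\D t^{-1}-K_1)^{-K_1}$. For the reciprocal ratio $\Gamma(j+1+\D t^{-1})/\Gamma(j+1+\D t^{-1}-K_1)$ appearing under the sum I need an \emph{upper} bound on $\Gamma(x+K_1)/\Gamma(x)$; writing $K_1=m+\eta$ with $m=\lfloor K_1\rfloor\ge1$, $\eta\in[0,1)$, I peel off the $m$ integer factors $\prod_{l=0}^{m-1}(x+\eta+l)\le(x+K_1)^{m}$ and bound the leftover $\Gamma(x+\eta)/\Gamma(x)\le(x+K_1)^{\eta}$ by the $0<\eta<1$ case of Lemma~\ref{lemgamma}, obtaining $\Gamma(j+1+\D t^{-1})/\Gamma(j+1+\D t^{-1}-K_1)\le(j+1+\D t^{-1})^{K_1}$.

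Substituting these bounds, the first term of the iteration is $O\big((k+\D t^{-1}-K_1)^{-2K_1}\big)$; in the sum the index-$j$ gamma factors cancel $(1+j\D t)^{-2K_1}$ up to the bounded factor $\big(1+1/(j+\D t^{-1})\big)^{2K_1}\le(1+\D t)^{2K_1}$, so the sum is at most a constant times $k\,(k+\D t^{-1}-K_1)^{-2K_1}$. Altogether
\begin{equation*}
u_k\le(k+\D t^{-1}-K_1)^{-2K_1}\big(C_1+C_2\,k\big)
\end{equation*}
for constants $C_1,C_2$ depending only on $K_1$, $C$, $\D t$ and $x(0)$. Hence $\log u_k\le-(2K_1-1)\log k+O(1)$ as $k\to\infty$ (using $k+\D t^{-1}-K_1\ge k$), and dividing by $\log(k\D t)\sim\log k$ gives \eqref{EMmeansquare}.

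The step I expect to be the main obstacle is the second one. Lemma~\ref{lemgamma} supplies a one-sided inequality only on each range of its parameter, so one of the two gamma ratios is controlled in the "wrong" direction and has to be handled by the integer/fractional splitting above; this is also precisely the point at which the hypothesis $K_1\ge1$ (rather than $K_1>1/2$, which suffices for the SDE in Theorem~\ref{themsde}) is forced, since for $K_1<1$ the head estimate $\Gamma(x)/\Gamma(x+K_1)\le x^{-K_1}$ fails. Once the splitting is in place, keeping track of the $\D t$-dependent constants and checking that they do not spoil the $\limsup$ is routine.
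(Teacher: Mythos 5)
Your proposal is correct and follows essentially the same route as the paper: the same one-step recursion with the perfect-square coefficient $\bigl(1-K_1\D t/(1+k\D t)\bigr)^2$, the same unrolling, the same conversion of the products to gamma ratios via Lemma~\ref{prodtogamma}, and the same two-sided use of Lemma~\ref{lemgamma} with the integer/fractional splitting $K_1=[K_1]+\eta$ (which, as you correctly identify, is exactly where $K_1\ge 1$ is forced). The only differences are cosmetic bookkeeping of the $\D t$-dependent constants, which does not affect the argument.
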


\begin{proof}
Taking square on both sides of (\ref{EMdis}) yields
\begin{equation*}
|Y_{k+1}|^2 = | Y_k + f(Y_k, k \D t ) \D t |^2 + | g(Y_k, k \D t) \Delta B_k |^2 + 2 \la Y_k + f(Y_k, k \D t ) \D t, g(Y_k, k \D t) \Delta B_k \ra.
\end{equation*}
Then taking expectation on both sides and using (\ref{lingrof}), (\ref{onesidf}) and (\ref{lingrog}), one can see that
\begin{eqnarray*}
\E |Y_{k+1}|^2 &\leq& \E |Y_k|^2 - \frac{2 K_1 \D t \E |Y_k|^2}{ 1 + k \D t} + \frac{K_1^2 \D t^2  \E |Y_k|^2 }{(1 + k \D t)^2} +\frac{C^2 \D t}{(1 + k \D t)^{2 K_2}}
\nonumber \\
&\leq& \left( 1 - \frac{K_1\Delta t}{1+k\Delta t} \right)^2 \E|Y_k|^2 + C^2(1 + k\Delta t)^{-2K_1} \Delta t.
\end{eqnarray*}
From the iteration above, one can derive the relation between $\E|Y_k|^2$ and $\E|Y_0|^2$ that
\begin{eqnarray}
\label{EMitori}
\E|Y_k|^2 &\leq& \left( \prod_{i=0}^{k-1}\left( 1-\frac{K_1\Delta t}{1+ i \Delta t}\right)^2 \right) \E|Y_0|^2 
\nonumber \\
&&+ \sum_{r=0}^{k-1} \left( \prod_{i=r+1}^{k-1}\left( 1-\frac{K_1 \Delta t}{1+i \Delta t}\right)^2 \right) (1 +r\Delta t)^{-2K_1} C^2 \Delta t,
\end{eqnarray}
where one assumes $\prod_{i=k}^{k-1} (1 - K_1 \D t (1 + i \D t )^{-1})^2 = 1$ for the convenience of the notation.
\par
Applying Lemma \ref{prodtogamma} to (\ref{EMitori}), one can simplify (\ref{EMitori}) to get that
\begin{eqnarray}
\label{EMitgamma}
\E|Y_k|^2 &\leq& \frac{\Gamma (k + \frac{1}{\Delta t} - K_1)^2 \Gamma (\frac{1}{\Delta t})^2}{\Gamma (k+\frac{1}{\Delta t})^2 \Gamma (\frac{1}{\Delta t}-K_1)^2} \E|Y_0|^2
\nonumber \\
&&+ \sum_{r=0}^{k-1} \frac{\Gamma (k + \frac{1}{\Delta t} - K_1)^2 \Gamma (r + 1+ \frac{1}{\Delta t})^2}{\Gamma (k+\frac{1}{\Delta t})^2 \Gamma ( r+ 1+\frac{1}{\Delta t} - K_1)^2} (1+r \Delta t)^{-2K_1} C^2 \Delta t.
\end{eqnarray}

One considers the first term on the right hand side of the equality in (\ref{EMitgamma}). Since $K_1 \geq 1$, by (\ref{tlargerone})  one can see that 
\begin{equation*}
\frac{\Gamma (k+\frac{1}{\Delta t})}{\Gamma (k + \frac{1}{\Delta t} - K_1)} = \frac{\Gamma ((k + \frac{1}{\Delta t} - K_1) + K_1)}{\Gamma (k + \frac{1}{\Delta t} - K_1)} \geq \left(k + \frac{1}{\Delta t} - K_1\right)^{K_1}.
\end{equation*}
Due to the fact that 
\begin{equation*}
\Gamma \left(\frac{1}{\Delta t}\right) = \prod_{i=1}^{[K_1]} \left(\frac{1}{\Delta t} - i \right) \Gamma \left(\frac{1}{\Delta t} - [K_1]\right),
\end{equation*}
where the iteration $\Gamma(x+1) = x\Gamma(x)$ is applied repeatedly, and (\ref{tlessone}), one can see 
\begin{equation*}
\frac{\Gamma (\frac{1}{\Delta t})}{\Gamma (\frac{1}{\Delta t} - K_1)} = \frac{\Gamma (\frac{1}{\Delta t} - [K_1])}{\Gamma (\frac{1}{\Delta t} - K_1)}\prod_{i=1}^{[K_1]} \left(\frac{1}{\Delta t} - i \right) \leq \left( \frac{1}{\D t} - K_1 \right)^{K_1 - [K_1]}  \left(\frac{1}{\Delta t} \right)^{[K_1]} \leq  \left(\frac{1}{\Delta t} \right)^{K_1}.
\end{equation*}
So one can obtain that 
\begin{equation}
\label{firupp}
\frac{\Gamma (k + \frac{1}{\Delta t} - K_1)^2 \Gamma (\frac{1}{\Delta t})^2}{\Gamma (k+\frac{1}{\Delta t})^2 \Gamma (\frac{1}{\Delta t} - K_1)^2} \leq (k + \frac{1}{\Delta t} - K_1)^{-2 K_1} \left(\frac{1}{\Delta t} \right)^{2K_1} \leq ((k -K_1)\D t + 1)^{-2K_1}.
\end{equation}
Next, one considers the second term on the right hand side of the equality in (\ref{EMitgamma}). Due to the fact that
\begin{equation*}
\Gamma \left(r + 1 + \frac{1}{\D t} \right) = \prod_{i=1}^{[K_1]} \left(r + 1 + \frac{1}{\D t} - i \right) \Gamma \left(r + 1 + \frac{1}{\D t} - [K_1] \right),
\end{equation*}
and (\ref{tlessone}), one can get that
\begin{eqnarray*}
\frac{\Gamma (r + 1 + \frac{1}{\D t} ) }{\Gamma (r + 1 + \frac{1}{\D t} - K_1)} &\leq& \left(r + 1 + \frac{1}{\D t} - K_1\right)^{K_1 - [K_1]} \prod_{i=1}^{[K_1]} \left(r + 1 + \frac{1}{\D t} - i \right) 
\nonumber \\
&\leq&  \left(r + 1 + \frac{1}{\D t} \right)^{K_1}
\nonumber \\
&\leq& ((r+1)\D t + 1)^{K_1} \D t^{- K_1}.
\end{eqnarray*}
So one can see that
\begin{eqnarray}
\label{secupp}
\frac{\Gamma (k + \frac{1}{\Delta t} - K_1)^2 \Gamma (r + 1+ \frac{1}{\Delta t})^2}{\Gamma (k+\frac{1}{\Delta t})^2 \Gamma ( r+ 1+\frac{1}{\Delta t} - K_1)^2} &\leq& \left(k + \frac{1}{\Delta t} - K_1\right)^{-2 K_1}((r+1)\D t + 1)^{2K_1} \D t^{- 2K_1}
\nonumber \\
&\leq& ((k -K_1)\D t + 1)^{-2K_1} ((r+1)\D t + 1)^{2K_1}.
\end{eqnarray}
Substituting (\ref{firupp}) and (\ref{secupp}) into (\ref{EMitgamma}) yields
\begin{eqnarray*}
\E|Y_k|^2 &\leq&  ((k -K_1)\D t + 1)^{-2K_1} \left( \E|Y_0|^2 + C^2 \Delta t \sum_{r=0}^{k-1} ((r+1)\D t + 1)^{2K_1}(1+r \Delta t)^{-2K_1}  \right)
\nonumber \\
&\leq& ((k -K_1)\D t + 1)^{-2K_1} \left( \E|Y_0|^2 + C^2 C_1^{2K_1} k \D t \right)
\nonumber \\
&\leq& (k \D t + 1)^{- 2 K_1 + 1} (\E|Y_0|^2  + C^2 C_1^{2K_1}),
\end{eqnarray*}
where the fact that $(1+(r+1) \Delta t)/(1+ r \Delta t)$ is bounded by some positive constant $C_1$ is used. Therefore, the assertion (\ref{EMmeansquare}) holds. \eproof
\end{proof}

\begin{rmk}
It can be noticed that for the EM method in the theorem above we need $K_1 \geq 1$ and this is due to the application of the estimate (\ref{tlargerone}), but for the SDE in Theorem \ref{themsde} we only need $K_1 > 0.5$. That is to say that although the EM method can reproduce the polynomial stability of the SDE, the result is not sharp.   
\end{rmk}
\subsection{A Counterexample }
Let us consider the following scalar SDE,
\begin{equation}
\label{conexp}
dx(t) = \frac{- 3 x(t) - x^3(t)}{1+ t} dt + \frac{1}{(1+t)^3} dB(t).
\end{equation}
It is not difficult to check that \eqref{onesidf} and \eqref{lingrog} hold but not \eqref{lingrof}. By Theorem \ref{themsde}, we know that the underlying solution to (\ref{conexp}) is mean square polynomial stable. But the following lemma shows that for any given initial value, the EM solution will blow up as time advances, which contracts to the initial-value independent
stability of the underlying SDE. 

\begin{lemma}
Suppose $\D t \in (0,0.5)$, the for any $Y_0 \in \RR$,
\begin{equation*}
\lim_{k \rightarrow \infty} \E |Y_k| = \infty.
\end{equation*}
\end{lemma}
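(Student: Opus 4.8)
The plan is to exploit the superlinear drift of \eqref{conexp}: once the discrete solution is large it grows at least geometrically, and although that may happen only with small probability, the growth is fast enough to drag the first moment to infinity. Write the EM map as $\phi_k(y):=y-\dfrac{(3y+y^3)\D t}{1+k\D t}$, so that $Y_{k+1}=\phi_k(Y_k)+\dfrac{\Delta B_k}{(1+k\D t)^3}$. A direct estimate gives $|\phi_k(y)|\ge|y|\Big(\dfrac{y^2\D t}{1+k\D t}-1\Big)$; hence, with the time-dependent threshold $R_k:=\sqrt{4(1+k\D t)/\D t}$, one has $|\phi_k(y)|\ge 3|y|$ whenever $|y|\ge R_k$. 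Since $\D t<1/2$, the thresholds grow slowly: $R_{k+1}/R_k=\sqrt{(1+(k+1)\D t)/(1+k\D t)}\le\sqrt{3/2}<3$, so one step above the threshold keeps us (well) above it.

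To pass to expectations I would introduce the random noise buffer $T_k:=\sum_{j\ge k}|\Delta B_j|/(1+j\D t)^3$. Since $\E|\Delta B_j|=\sqrt{2\D t/\pi}$ and $\sum_j(1+j\D t)^{-3}<\infty$, we have $\E T_k<\infty$, so $T_k<\infty$ a.s. I would then show, by induction on $k\ge1$ and on the event $E:=\{\,|Y_1|\ge R_1+T_1\,\}$, that $|Y_k|\ge R_k+T_k$ for all $k\ge1$: from $|Y_k|\ge R_k$ we get $|Y_{k+1}|\ge|\phi_k(Y_k)|-|\Delta B_k|/(1+k\D t)^3\ge 3(R_k+T_k)-|\Delta B_k|/(1+k\D t)^3$, and the decomposition $T_k=T_{k+1}+|\Delta B_k|/(1+k\D t)^3$ together with $3R_k>R_{k+1}$ gives $|Y_{k+1}|\ge R_{k+1}+T_{k+1}$. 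The same lines yield $|Y_{k+1}|\ge 3|Y_k|-|\Delta B_k|/(1+k\D t)^3\ge 2|Y_k|$ on $E$ (the subtracted term is at most $T_k\le|Y_k|$), whence $|Y_k|\ge 2^{\,k-1}|Y_1|\ge 2^{\,k-1}R_1$ on $E$.

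Finally I would check $\PP(E)>0$. Here $Y_1=Y_0(1-3\D t)-Y_0^3\D t+\Delta B_0$ with $\Delta B_0\sim N(0,\D t)$ independent of $T_1$ (which depends only on $\Delta B_1,\Delta B_2,\dots$); conditioning on $T_1$ and using that a nondegenerate Gaussian charges every half-line gives $\PP(E\mid T_1)>0$ a.s., hence $\PP(E)>0$. Combining, $\E|Y_k|\ge\E\big[|Y_k|\mathbf 1_E\big]\ge 2^{\,k-1}R_1\,\PP(E)\to\infty$, which is the assertion. The main obstacle is the bookkeeping in the middle step: designing the threshold $R_k$ and, especially, the buffer $T_k$ so that the geometric-growth induction survives the small but a.s.\ unbounded noise increments; once that is arranged, positivity of $\PP(E)$ is an easy consequence of the full support of the Gaussian, and it is precisely the geometric (in fact faster-than-geometric) growth on $E$ that lets a positive-probability escape event force the mean to diverge.
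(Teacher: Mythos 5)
Your proof is correct, but it takes a genuinely different route from the paper's. The paper runs its induction at the level of conditional first moments: on the positive-probability event $\{|Y_1|\ge 3(\D t/(1+\D t))^{-1/2}\}$ it shows $\E(|Y_k|\mid Y_1)\ge (\D t/(1+k\D t))^{-1/2}(k+2)$, using Jensen's inequality $\E(|Y_k|^3\mid Y_1)\ge(\E(|Y_k|\mid Y_1))^3$ to push the cubic drift through the conditional expectation and the crude bound $\E|\D B_k|<1$ to absorb the noise in mean. You instead run a purely pathwise induction on the event $E=\{|Y_1|\ge R_1+T_1\}$, where the almost surely finite tail sum $T_k=\sum_{j\ge k}|\D B_j|/(1+j\D t)^3$ serves as a prepaid noise budget; this yields geometric growth $|Y_k|\ge 2^{k-1}R_1$ on $E$, stronger than the paper's polynomial lower bound, and avoids Jensen entirely. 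The trade-offs: your argument leans on the summability of the diffusion coefficients $(1+j\D t)^{-3}$ along paths (a specific feature of this counterexample), whereas the paper's conditional-moment bookkeeping only needs $\E|\D B_k|$ bounded and would survive noise that is not pathwise summable; on the other hand your construction cleanly separates the deterministic escape mechanism (the threshold $R_k$ and the contraction $R_{k+1}\le\sqrt{3/2}\,R_k<3R_k$) from the stochastic input, and the positivity of $\PP(E)$ follows from independence of $\D B_0$ and $T_1$ together with the full support of the Gaussian, exactly parallel to the paper's use of the first increment. Both proofs rest on the same underlying mechanism — with positive probability the first step lands in the region where the superlinear term $Y_k^3\D t/(1+k\D t)$ dominates, after which divergence is forced — so either is acceptable; yours is self-contained and, if anything, gives a sharper conclusion.
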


\begin{proof}
Using the property of conditional expectations, one has that
\begin{equation}
\label{mainexparg}
\E |Y_{k+1}| = \E (\E(|Y_{k+1}| \big\vert Y_1)) \geq \E (\mathbf{1}_{\{|Y_1| \geq 3(\D t/(1+ \D t))^{-0.5}\}} \E (|Y_{k+1}| \big\vert Y_1)).
\end{equation}
As there exists a nonzero probability that the first Brownian motion increment will yield $|Y_1| \geq 3(\D t/(1+ \D t))^{-0.5}$, one only needs to prove that $\E (|Y_{k+1}| \big\vert Y_1) \geq (\D t/(1+ (k+1)\D t))^{-0.5} (k+3)$ for all $k \geq 0$ given $|Y_1| \geq (\D t/(1+ \D t))^{-0.5}$. One shows this by induction. Obviously, $\E (|Y_1| \big\vert Y_1) = |Y_1| \geq 3(\D t/(1+ \D t))^{-0.5}$. Suppose $E (|Y_{k}| \big\vert Y_1) \geq (\D t/(1+ k\D t))^{-0.5} (k+2)$ for some $k \geq 1$, one will show that for any $\D t \in (0,0.5)$, $\E (|Y_{k+1}| \big\vert Y_1) \geq (\D t/(1+ (k+1)\D t))^{-0.5} (k+3)$. Applying the EM method to the SDE (\ref{conexp}), one has that
\begin{equation*}
|Y_{k+1}| = \left |Y_k + \frac{-3 Y_k - Y_k^3}{1 + k \D t} \D t + \frac{1}{(1+k \D t)^3} \D B_k \right |.
\end{equation*} 
By the elementary inequality, one can see that
\begin{eqnarray}
\label{interineone}
|Y_{k+1}|  &\geq& \left | \frac{\D t}{1+k \D t} Y_k^3  \right | - \left (1 - \frac{3 \D t}{1+k \D t} \right) |Y_k| - \frac{1}{(1+k \D t)^3} |\D B_k|
\nonumber \\
&\geq& \left | \frac{\D t}{1+k \D t} Y_k^3  \right | - |Y_k| - |\D B_k|.
\end{eqnarray}
Thanks to H\"older's inequality, one has $\E (|Y_k|^3 \big \vert Y_1) \geq (\E (|Y_k| \big \vert Y_1))^3$. Since $\D B_k$ is independent of $Y_1$ for all $k > 0$, one has $\E (|\D B_k| \big \vert Y_1) = \E (|\D B_k|) < 1$. Taking conditional expectation on both sides of (\ref{interineone}) one has that
\begin{eqnarray*}
\E (|Y_{k+1}| \big \vert Y_1) &\geq& \frac{\D t}{1+k \D t} (\E (|Y_k| \big \vert Y_1))^3 - \E (|Y_k|\big \vert Y_1) - 1
\nonumber \\
&\geq& \E |Y_k| \left ( \frac{\D t}{1+k \D t} (\E ( |Y_k| \big \vert Y_1))^2 - 1\right ) - 1
\nonumber \\
&\geq& \left ( \frac{\D t}{1 + k \D t} \right )^{-0.5} (k+2) \left ( (k+2)^2 - 1 \right ) -1
\nonumber \\
&\geq& \left ( \frac{\D t}{1 + (k+1) \D t} \right )^{-0.5} \left ( \frac{1 + k\D t}{1 + (k+1) \D t} \right )^{0.5} ((k+2)^3 - (k+2)) - 1
\nonumber \\
&\geq& \left ( \frac{\D t}{1 + (k+1) \D t} \right )^{-0.5} (k+3).
\end{eqnarray*}
Then substituting this back to (\ref{mainexparg}), one obtains that 
\begin{equation*}
\E |Y_{k+1}|  \geq \left ( \frac{\D t}{1 + (k+1) \D t} \right )^{-0.5} (k+3) \PP \left ( |Y_1| \geq 3 \left (\frac{\D t}{1+ \D t} \right )^{-0.5} \right ).
\end{equation*}
Therefore the assertion holds. \eproof
\end{proof}
\noindent
\par

For more information about examples of these type, we refer the readers to \cite{HMY2007a,HJK2011a}.  The backward Euler-Maruyama method which is a semi-implicit method is a good replacement of the EM method in this situation. 

\subsection{The Backward Euler--Maruyama Method}
The BEM method for the SDE (\ref{sde}) is defined by
\begin{equation}
\label{BEM}
Z_{k+1} = Z_k + f(Z_{k+1}, (k+1) \D t) \D t + g(Z_k,k\D t) \D B_k, ~~~Z_0 = x(0).
\end{equation}

Since BEM is a semi-implicit method, we will need the following result for the method to be well defined. This result basically established the existence of a solution to a certain equation which is needed for the BEM to makes sense.

\begin{lemma}
Suppose that Condition \ref{onesidedlipc} and (\ref{onesidf}) of Conditon \ref{growth} hold and that $\D t < |\bar{K}|^{-1}$. Then for any $t > 0$ and $b \in \RR^n$, there is a unique root $x \in \RR^n$ of the equation
\begin{equation*}
x = f(x,t) \D t + b.
\end{equation*}
\end{lemma}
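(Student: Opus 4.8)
The plan is to study the map $F\colon\RR^n\to\RR^n$ given by $F(x)=x-\D t\,f(x,t)-b$ and show it has exactly one zero, which is precisely a root of $x=f(x,t)\D t+b$. Existence will follow from a coercivity estimate based on (\ref{onesidf}) together with a standard topological consequence of Brouwer's fixed point theorem, while uniqueness will follow from the one-sided Lipschitz bound (\ref{onesidedlip}) of Condition \ref{onesidedlipc} combined with the step-size restriction $\D t<|\bar K|^{-1}$.

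For uniqueness, I would suppose $x_1$ and $x_2$ both solve $x=f(x,t)\D t+b$, subtract the two identities, and take the inner product with $x_1-x_2$ to get $|x_1-x_2|^2=\D t\,\la x_1-x_2,\,f(x_1,t)-f(x_2,t)\ra\le \D t\,\bar K(1+t)^{-1}|x_1-x_2|^2$. Since $t>0$ gives $(1+t)^{-1}<1$, and $\D t\,|\bar K|<1$ by hypothesis, the factor $\D t\,\bar K(1+t)^{-1}$ is strictly less than $1$ when $\bar K>0$ and is nonpositive when $\bar K\le 0$; in either case $x_1=x_2$.

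For existence, $f(\cdot,t)$ is continuous by the local Lipschitz bound (\ref{loclip}), hence so is $F$. Using (\ref{onesidf}),
\[
\la F(x),x\ra=|x|^2-\D t\,\la f(x,t),x\ra-\la b,x\ra\ \ge\ |x|^2+\D t\,K_1(1+t)^{-1}|x|^2-|b|\,|x|\ \ge\ |x|^2-|b|\,|x|,
\]
so $\la F(x),x\ra>0$ whenever $|x|>|b|$. Fixing any $R>|b|$ and invoking the well-known corollary of Brouwer's theorem — a continuous map $F\colon\overline{B_R}\to\RR^n$ with $\la F(x),x\ra\ge 0$ on $|x|=R$ must vanish at some point of $\overline{B_R}$ — yields $x^\ast\in\RR^n$ with $F(x^\ast)=0$, i.e. $x^\ast=f(x^\ast,t)\D t+b$.

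The only genuine difficulty is the existence step, since it is the one place a topological input (Brouwer, or equivalently a degree argument) is unavoidable; one must also be mildly careful that Condition \ref{onesidedlipc} by itself does not supply continuity of $f$, which is why the argument leans on the standing local Lipschitz hypothesis. An alternative, arguably cleaner, route handles both parts simultaneously: by (\ref{onesidedlip}) and $\D t<|\bar K|^{-1}$ the map $x\mapsto x-\D t\,f(x,t)$ is strictly monotone, and by (\ref{onesidf}) it is coercive, so the Browder--Minty theorem on $\RR^n$ shows it is a homeomorphism of $\RR^n$ and delivers existence and uniqueness in one stroke; I would present the elementary Brouwer-based version in the body and mention this as a remark.
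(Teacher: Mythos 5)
Your proposal is correct, and its mathematical skeleton coincides with the paper's: both arguments rest on exactly the two estimates you derive, namely strict monotonicity of $x \mapsto x - f(x,t)\Delta t$ from the one-sided Lipschitz bound \eqref{onesidedlip} with $\Delta t < |\bar K|^{-1}$, and coercivity from \eqref{onesidf}. The difference is purely in how the final step is discharged. The paper stops after establishing monotonicity and coercivity and cites a monotone-operator lemma (Lemma 3.1 of the reference given there, essentially the Browder--Minty theorem on $\RR^n$) to conclude existence and uniqueness in one stroke --- precisely the ``alternative route'' you relegate to a closing remark. You instead open up that black box: uniqueness by the direct subtraction argument (your case split on the sign of $\bar K$, using $t>0$ and $\Delta t\,|\bar K|<1$, is sound), and existence by the Brouwer-based zero lemma applied to $F(x) = x - \Delta t f(x,t) - b$ on a ball of radius $R > |b|$, where your boundary estimate $\la F(x),x\ra \ge |x|^2 - |b|\,|x| > 0$ is correct. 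Your version is more self-contained and also more careful on one genuine point the paper glosses over: the statement of the lemma does not formally assume the local Lipschitz condition \eqref{loclip}, yet continuity of $f(\cdot,t)$ is needed for any topological or monotone-operator argument; you correctly note that it must be imported as a standing hypothesis. Either presentation is acceptable; yours buys transparency at the cost of a slightly longer proof, while the paper's buys brevity by citation.
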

\begin{proof}
Define $F(x,t) = x - f(x,t) \D t$. By (\ref{onesidedlip}) one can see that
\begin{equation*}
\la x - y, F(x,t) - F(y,t) \ra \geq | x - y|^2 -\bar{K} (1+ t)^{-1} \D t |x - y|^2 \geq (1 - |\bar{K}| \D t) |x-y|^2 > 0,
\end{equation*}
for $\D t < |\bar{K}|^{-1}$, i.e. $F(x)$ is monotone. In addition, one can derive from (\ref{onesidf}) that 
\begin{equation*}
\la x , F(x,t) \ra \geq (1 + K_1 \D t (1+ t)^{-1}) |x|^2,
\end{equation*}
which indicates 
\begin{equation*}
\lim_{|x| \rightarrow \infty} \frac{\la x , F(x,t) \ra}{|x|} = \infty.
\end{equation*}
We can conclude that here is a unique root $x \in \RR^n$ of the equation. See for instance Lemma 3.1 in \cite{MS2013a}.
\end{proof}

\begin{theorem}
\label{themBEM}
Suppose that  (\ref{onesidf}) and (\ref{lingrog}) of Condition \ref{growth} hold.  If $K_1 > 0.5$ and  $\D t <  \min(|\bar{K}|^{-1}, K_1^{-1})$ then the BEM solution (\ref{BEM}) satisfies
\begin{equation}
\label{BEMmeansquare}
\limsup_{k \rightarrow \infty} \frac{\log \E |Z_k|^2}{\log k \D t} \leq - (2 K_1 - 1),
\end{equation}
for any intial value $Z_0 \in \RR^n$.
\end{theorem}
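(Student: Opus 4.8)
The plan is to follow the route of the proof of Theorem \ref{themEM}, but to apply the one-sided bound \eqref{onesidf} at the \emph{implicit} point $Z_{k+1}$. This is exactly what lets us dispense with the growth condition \eqref{lingrof} and, more importantly, what produces a single gamma ratio with exponent $2K_1$ rather than a squared ratio with exponent $K_1$; consequently only $K_1>0.5$ (equivalently $2K_1>1$) will be needed to invoke \eqref{tlargerone}. Well-posedness of each BEM step is supplied by the previous lemma under $\D t<|\bar{K}|^{-1}$, and one also checks by a routine induction (using $|Z_{k+1}|\le|Z_k+g(Z_k,k\D t)\D B_k|$, which follows from \eqref{onesidf}) that $\E|Z_k|^2<\infty$ for every $k$.

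First I would rewrite \eqref{BEM} as $Z_{k+1}-f(Z_{k+1},(k+1)\D t)\D t=Z_k+g(Z_k,k\D t)\D B_k$, square the Euclidean norms of both sides, discard the nonpositive term $-|f(Z_{k+1},(k+1)\D t)|^2\D t^2$, and use \eqref{onesidf} to move the remaining drift contribution to the left:
\begin{equation*}
\left(1+\frac{2K_1\D t}{1+(k+1)\D t}\right)|Z_{k+1}|^2\le |Z_k|^2+2\la Z_k,g(Z_k,k\D t)\ra\D B_k+|g(Z_k,k\D t)|^2|\D B_k|^2 .
\end{equation*}
Taking expectations annihilates the stochastic term ($g(Z_k,k\D t)$ is $\F_{k\D t}$-measurable and $\D B_k$ is independent with zero mean) and, with \eqref{lingrog} and $\E|\D B_k|^2=\D t$, yields $\E|Z_{k+1}|^2\le\gamma_k(\E|Z_k|^2+C^2(1+k\D t)^{-2K_1}\D t)$, where
\begin{equation*}
\gamma_k:=\frac{1+(k+1)\D t}{1+(k+1+2K_1)\D t}=1-\frac{2K_1\D t}{1+(k+1+2K_1)\D t} .
\end{equation*}
Iterating from $0$ to $k-1$ gives, in analogy with \eqref{EMitori},
\begin{equation*}
\E|Z_k|^2\le\left(\prod_{i=0}^{k-1}\gamma_i\right)\E|Z_0|^2+C^2\D t\sum_{r=0}^{k-1}\left(\prod_{i=r}^{k-1}\gamma_i\right)(1+r\D t)^{-2K_1} .
\end{equation*}

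Next I would invoke Lemma \ref{prodtogamma} with $\alpha=2K_1$, $\beta=1+2K_1$, $\delta=\D t$. Every factor $\gamma_i$ is strictly positive because $\beta-\alpha=1>0$, so the product-to-gamma identity applies here even though $\D t<K_1^{-1}$ is not as small as $\D t<(2K_1)^{-1}$; it gives
\begin{equation*}
\prod_{i=r}^{k-1}\gamma_i=\frac{\Gamma\!\left(k+1+\frac{1}{\D t}\right)}{\Gamma\!\left(k+1+\frac{1}{\D t}+2K_1\right)}\cdot\frac{\Gamma\!\left(r+1+\frac{1}{\D t}+2K_1\right)}{\Gamma\!\left(r+1+\frac{1}{\D t}\right)},
\end{equation*}
and the coefficient of $\E|Z_0|^2$ is this same expression with $r=0$. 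Because $2K_1>1$, \eqref{tlargerone} gives $\Gamma(k+1+\frac{1}{\D t}+2K_1)/\Gamma(k+1+\frac{1}{\D t})>(k+1+\frac{1}{\D t})^{2K_1}$, so the first gamma factor is at most $(k+1+\frac{1}{\D t})^{-2K_1}$. For the second factor I would peel off $[2K_1]$ integer factors via $\Gamma(z+1)=z\Gamma(z)$ and bound the remaining fractional-exponent ratio by \eqref{tlessone}, exactly as in the derivation of \eqref{firupp} and \eqref{secupp}, to obtain $\Gamma(r+1+\frac{1}{\D t}+2K_1)/\Gamma(r+1+\frac{1}{\D t})\le\hat C\,(r+1+\frac{1}{\D t})^{2K_1}$ with $\hat C$ depending only on $K_1$ (here $\D t<K_1^{-1}$ keeps $\hat C$ bounded). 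Hence $\prod_{i=r}^{k-1}\gamma_i\le\hat C\,(1+(r+1)\D t)^{2K_1}/(1+(k+1)\D t)^{2K_1}$, so the $\E|Z_0|^2$ term is $O((1+k\D t)^{-2K_1})$, while the sum becomes $\frac{\hat C C^2\D t}{(1+(k+1)\D t)^{2K_1}}\sum_{r=0}^{k-1}(1+(r+1)\D t)^{2K_1}/(1+r\D t)^{2K_1}$; since $(1+(r+1)\D t)/(1+r\D t)\le 1+\D t$, the sum is $O(k)$ and this term is $O((1+k\D t)^{1-2K_1})$. Adding the two bounds, $\E|Z_k|^2\le C_\star(1+k\D t)^{-(2K_1-1)}$ for all large $k$; dividing $\log\E|Z_k|^2$ by $\log k\D t$ and letting $k\to\infty$ then gives \eqref{BEMmeansquare}.

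The step I expect to be the main obstacle is the gamma-ratio bookkeeping that extracts the exponent $2K_1$ uniformly in $r$ under the sole restriction $K_1>0.5$: one must make the integer-part splitting behind \eqref{tlessone}--\eqref{tlargerone} work for all admissible $K_1$ (integer or not), keep the constant $\hat C$ independent of $r$ and controlled in $\D t$, and verify the positivity that licenses Lemma \ref{prodtogamma}, which here is secured by $\beta-\alpha=1$ rather than by smallness of $\D t$. The It\^o-type expansion, the vanishing of the cross term, the moment finiteness, and the final $\limsup$ computation are all routine. \eproof
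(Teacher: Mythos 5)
Your proposal is correct and follows essentially the same route as the paper: the same one-step contraction factor $\gamma_k = 1 - 2K_1\D t/(1+(k+1)\D t+2K_1\D t)$ (you obtain it by squaring $Z_{k+1}-f\D t=Z_k+g\D B_k$ and discarding $|f|^2\D t^2$, the paper by an inner-product decomposition plus Young's inequality, with identical outcome), the same iteration, the same application of Lemma \ref{prodtogamma} with $\alpha=2K_1$, and the same gamma-ratio estimates via Lemma \ref{lemgamma}. Your side remark that positivity of the factors is secured by $\beta-\alpha>0$ rather than by $\D t<(2K_1)^{-1}$ is a careful point the paper itself glosses over when invoking Lemma \ref{prodtogamma} under only $\D t<K_1^{-1}$.
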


\begin{proof}
From (\ref{BEM}), one has 
\begin{equation*}
|Z_{k+1}|^2 = \la Z_{k+1}, Z_k + g(Z_k,k\D t) \D B_k \ra + \la Z_{k+1} , f(Z_{k+1}, (k+1) \D t) \D t \ra.
\end{equation*}
By the elementary inequality and (\ref{onesidf}) , one can see that
\begin{equation*}
|Z_{k+1}|^2 \leq 0.5 |Z_{k+1}|^2 + 0.5 | Z_k + g(Z_k,k\D t) \D B_k |^2 - K_1 \D t (1+(k+1)\D t)^{-1} |Z_{k+1}|^2.
\end{equation*}
Simplifying the inequality above by putting all terms of $Z_{k+1}$ on left hand side of the inequality, one has
\begin{equation*}
|Z_{k+1}|^2 \leq \left( 1 - \frac{2K_1\D t}{1 + (k+1) \D t + 2 K_1 \D t} \right) | Z_k + g(Z_k,k\D t) \D B_k |^2.
\end{equation*} 

Taking expectations on both sides and using (\ref{lingrog}) yields
\begin{equation*}
\E |Z_{k+1}|^2 \leq \left( 1 - \frac{2K_1\D t}{1 + (k+1) \D t + 2 K_1 \D t} \right) \left( \E |Z_k|^2 + C^2 \D t (1+k \D t)^{- 2 K_1} \right).
\end{equation*}
Now by the iteration, one has that
\begin{eqnarray*}
\E |Z_k|^2 &\leq& \prod_{i=1}^{k} \left( 1 - \frac{2K_1\D t}{1 + i \D t + 2 K_1 \D t} \right) \E |Z_0|^2
\nonumber \\
&&+ C^2 \D t \sum_{r=0}^{k-1} \prod_{i=r+1}^{k} \left( 1 - \frac{2K_1\D t}{1 + i \D t + 2 K_1 \D t} \right) (1 + r \D t)^{- 2 K_1}.
\end{eqnarray*}
Applying Lemma \ref{prodtogamma}, one can rewrite the inequality above into 
\begin{eqnarray}
\label{BEMmain}
\E |Z_k|^2 &\leq& \frac{\Gamma (k + 1 + \frac{1}{\D t})  \Gamma (1 + 2 K_1 + \frac{1}{\D t})}{\Gamma (k + 1 + \frac{1}{\D t} + 2 K_1)  \Gamma (1 +  \frac{1}{\D t})}
\nonumber \\
&&+ C^2 \D t \sum_{r=0}^{k-1} \frac{\Gamma (k + 1 + \frac{1}{\D t}) \Gamma (r + 1 + \frac{1}{\D t} + 2K_1)} {\Gamma (k + 1 + \frac{1}{\D t} + 2 K_1) \Gamma (r + 1 + \frac{1}{\D t})} (1 + r \D t)^{- 2 K_1}.
\end{eqnarray}
Using the similar argument as the proof of Theorem \ref{themEM} and the estimates in Lemma \ref{lemgamma}, one can obtain that
\begin{eqnarray*}
&&\frac{\Gamma (k + 1 + \frac{1}{\D t}) }{\Gamma (k + 1 + \frac{1}{\D t} + 2 K_1)} \frac{\Gamma (1 + 2 K_1 + \frac{1}{\D t})}{\Gamma (1 +  \frac{1}{\D t})}
\nonumber \\
&\leq& \frac{\Gamma (k + 1 + \frac{1}{\D t}) }{\Gamma (k + 1 + \frac{1}{\D t} + 2 K_1)}  \frac{\Gamma (1 +  \frac{1}{\D t} + 2K_1 - [2K_1])}{\Gamma (1 +  \frac{1}{\D t})} \prod_{i=1}^{[2K_1]} \left( 1+ \frac{1}{\D t} + 2 K_1 - i \right)
\nonumber \\
&\leq& \left( k+ 1 +\frac{1}{\D t} \right)^{-2 K_1}   \left( 1 +\frac{1}{\D t} \right)^{2 K_1 - [2 K_1]}  \left( 1 +\frac{1}{\D t} + 2 K_1 \right)^{[2 K_1]}
\nonumber \\
&\leq& ((k+1)\D t + 1)^{-2K_1} ((1+2 K_1)\D t + 1)^{2 K_1},  
\end{eqnarray*}
and
\begin{eqnarray*}
&&\frac{\Gamma (k + 1 + \frac{1}{\D t})}{\Gamma (k + 1 + \frac{1}{\D t} + 2 K_1)} \frac{\Gamma (r + 1 + \frac{1}{\D t} + 2K_1)}{\Gamma (r + 1 + \frac{1}{\D t})}
\nonumber \\
&\leq& \frac{\Gamma (k + 1 + \frac{1}{\D t})}{\Gamma (k + 1 + \frac{1}{\D t} + 2 K_1)} \frac{\Gamma (r + 1 + \frac{1}{\D t} + 2K_1 - [2K_1])}{\Gamma (r + 1 + \frac{1}{\D t})} \prod_{i=1}^{[2 K_1]} \left(r + 1 + \frac{1}{\D t} + 2 K_1 - i \right)
\nonumber \\
&\leq& \left(k + 1 + \frac{1}{\D t} \right)^{- 2 K_1} \left( r + 1 + \frac{1}{\D t} \right)^{2K_1 - [2 K_1]} \left(r + 1 + \frac{1}{\D t} + 2 K_1 \right)^{[2 K_1]}
\nonumber \\
&\leq& ((k+1)\D t + 1)^{- 2 K_1} ((r+1+ 2 K_1)\D t + 1)^{2 K_1}.
\end{eqnarray*}
Substituting these two estimates back to (\ref{BEMmain}), one can see that
\begin{eqnarray*}
\E |Z_k|^2 &\leq& ((k+1)\D t + 1)^{- 2 K_1} \bigg( ((1+2 K_1)\D t + 1)^{2 K_1} \E |Z_0|^2
\nonumber \\
&&+ C^2 \D t \sum_{r=0}^{k-1} ((r+1+ 2 K_1)\D t + 1)^{2 K_1} (1 + r \D t)^{- 2 K_1}\bigg).
\end{eqnarray*}
Since $((r+1+ 2 K_1)\D t + 1) / (1 + r \D t)$ can be bounded by some positive constant $C_2$, one has that 
\begin{eqnarray*}
\E |Z_k|^2 &\leq& ((k+1)\D t + 1)^{- 2 K_1} \bigg( ((1+2 K_1)\D t + 1)^{2 K_1} \E |Z_0|^2 + C^2 C_2^{2 K_1} k \D t \bigg)
\nonumber \\
&\leq& ((k+1)\D t + 1)^{- 2 K_1 + 1} (\E |Z_0|^2 + C^2 C_2^{2 K_1}).
\end{eqnarray*}
Then the assertion (\ref{BEMmeansquare}) holds. \eproof
\end{proof}

\begin{rmk}
One can notice that when applying Lemma \ref{lemgamma} to estimate the two terms on the right hand side of the inequality (\ref{BEMmain}) we only need $K_1 > 0.5$ and this is due to the term $2K_1$. The condition for $K_1$ here is in line with the that in Theorem \ref{themsde} for the SDE and better than the condition  for the EM method. 
\end{rmk}

\section{Numerical Examples}\label{numexp}

We first consider the equation that
\begin{equation*}
dx(t) = - \frac{x(t)}{1+t} dt + \frac{1}{1+t}dB(t).
\end{equation*}
It is easy to check that (\ref{lingrof}), (\ref{onesidf}) and (\ref{lingrog}) are all satisfied with $K_1 = 1$. Applying the EM method to the equation with step size 0.1, we simulate 1000 paths and plot the mean square of them on the left of Figure \ref{linearcase}.

\begin{figure}[ht]
\begin{center}$
\begin{array}{cc}
\includegraphics[width=3in]{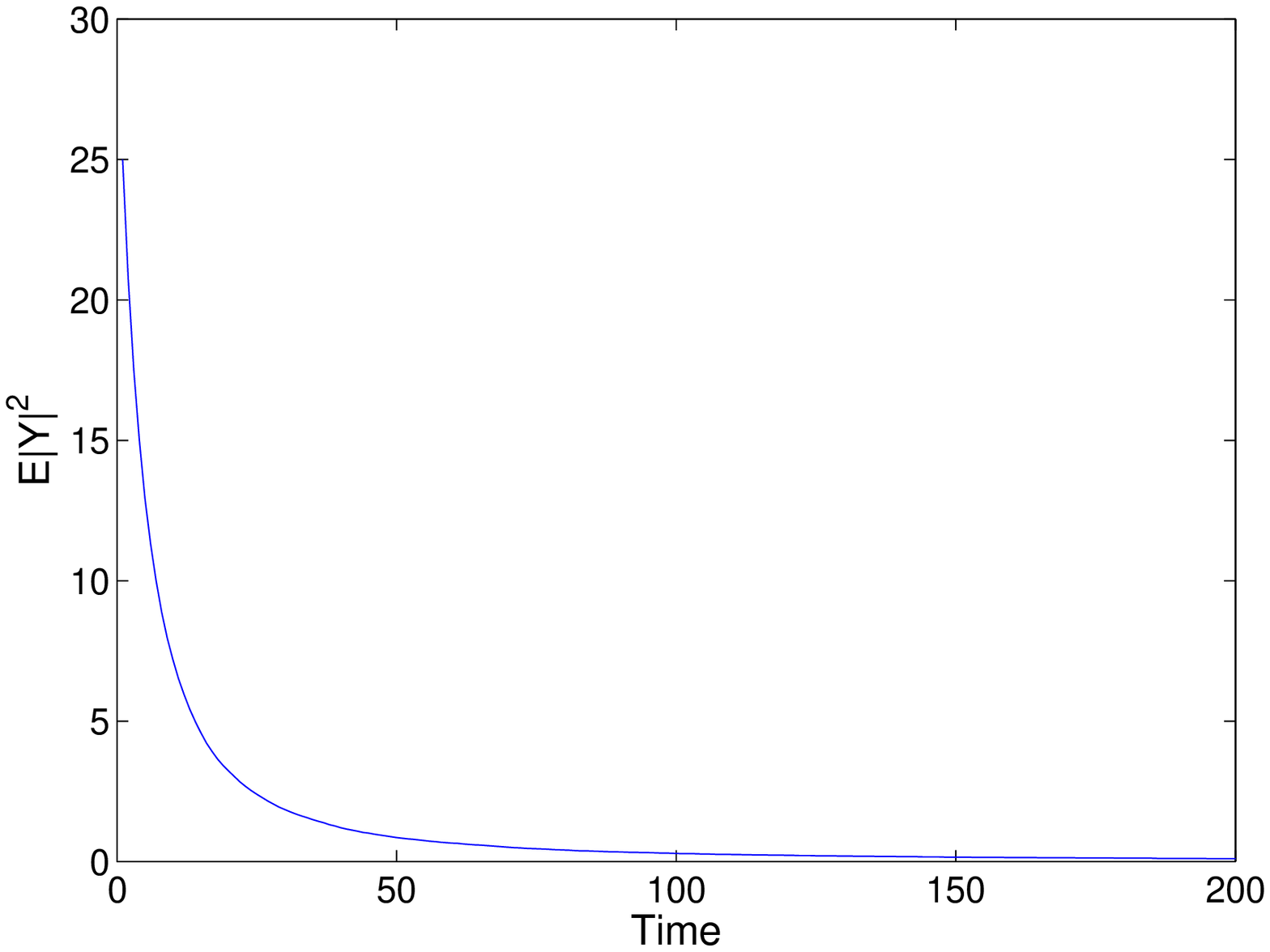} &
\includegraphics[width=3in]{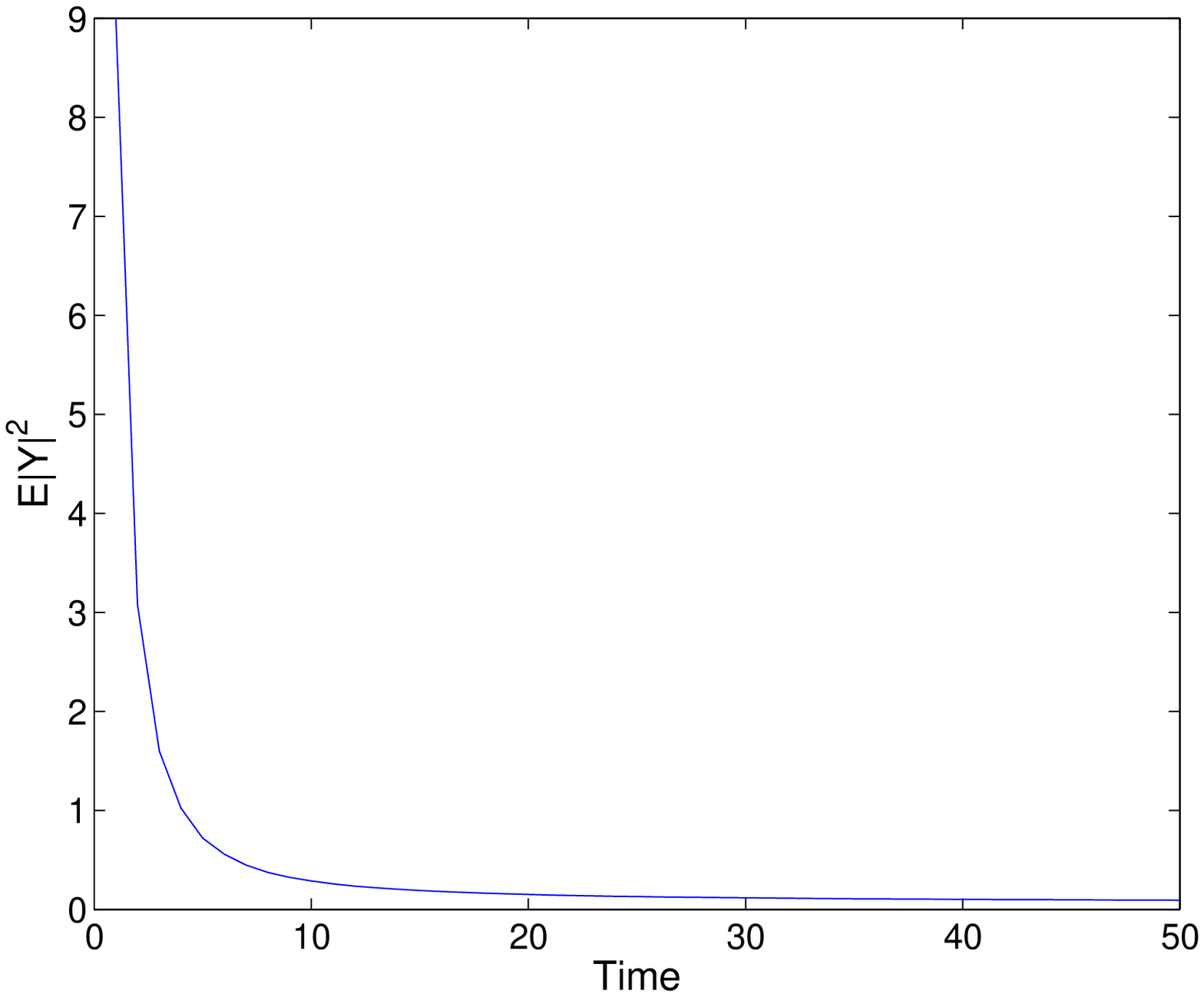}
\end{array}$
\end{center}
\caption{Left: mean square of the EM solution; Right: mean square of the BEM solution }
\label{linearcase}
\end{figure}
\noindent

Next we consider the following equation 
\begin{equation*}
dx(t) = \frac{-3 x(t)- x^3(t)}{(1+t)^2} dt + \frac{5 \sin(x(t))}{(1+t)^4}dB(t).
\end{equation*}
It can be seen that (\ref{lingrof}) is no longer satisfied, but (\ref{onesidf}) and (\ref{lingrog}) still hold with $K_1 = 3$. The mean square of the BEM solution calculated from 100 paths with step size 0.3 is plot on the right of Figure \ref{linearcase}.

\section{Conclusions and Future Research}\label{conclusion}
In this letter, the reproduction of the mean square polynomial stability of a class of SDEs is studied. Both the EM method and the BEM method are considered here. It is shown BEM method can recover the polynomial stability properties of a wider range of SDEs. Our techniques are based on properties of the gamma function and they are well suited for showing polynomial decay rates. 
\par
To our best knowledge, the polynomial stability appears more often in stochastic differential delay equations (SDDEs). But few works have been done on the numerical reproduction of the polynomial decay rate for SDDEs, particularly in the moment sense. Therefore, this could be one of the interesting future research.
\bibliography{references}
\end{document}